\title[On cohomology of  crystallographic groups of split type]{On cohomology of  crystallographic groups with cyclic holonomy of split type}
\author{Nansen Petrosyan}
\address{Department of Mathematics, Catholic University of Leuven, Kortrijk, Belgium}%
\email{Nansen.Petrosyan@kuleuven-kortrijk.be}%
\author{Bartosz Putrycz}
\address{Department of Mathematics, Catholic University of Leuven, Kortrijk, Belgium}%
\email{Bartosz.Putrycz@kuleuven-kortrijk.be}%
\thanks{Both authors were supported by the Research Fund K.U.Leuven.}
\thanks{The first author was also supported by the FWO-Flanders Research Fellowship.}
\numberwithin{equation}{section}
\newcommand{\divides}{|}
\theoremstyle{plain}
\newtheorem{thm}{Theorem}[section]
\newtheorem{prop}[thm]{Proposition}
\newtheorem{cor}[thm]{Corollary}
\newtheorem{conj}{Conjecture}[section]
\theoremstyle{definition}
\theoremstyle{remark}
\newtheorem{rem}{Remark}
\newcommand{\gen}[1]{\langle #1 \rangle}
\newcommand{\Z}{\mathbb{Z}}
\newcommand{\R}{\mathbb{R}}
\newcommand{\B}{\scriptscriptstyle B}
\newcommand{\M}{{\scriptstyle M}}
\DeclareMathOperator{\Id}{Id}
\DeclareMathOperator{\im}{im}
\DeclareMathOperator{\Hom}{Hom}
\DeclareMathOperator{\SNF}{SNF}
\DeclareMathOperator{\oH}{H}
\DeclareMathOperator{\odiag}{diag}
\begin{document}

\maketitle

\begin{abstract}
We disprove a conjecture stating that the integral cohomology of any $n$-dimensional crystallographic group $\Z^n\rtimes \Z_m$ admits a decomposition:
$$\mathrm{H}^* (\Z^n\rtimes \Z_m) \cong \bigoplus_{i+j=*} \mathrm{H}^i (\Z_m, \mathrm{H}^j (\Z^n))$$
by providing a complete list of counterexamples up to dimension 5. We also find a counterexample with odd order holonomy, $m=9$, in dimension
8 and finish the computations of the cohomology of $6$-dimensional crystallographic groups arising as orbifold fundamental groups of certain Calabi-Yau toroidal orbifolds.
\end{abstract}

\section{Introduction}
An $n$-dimensional {\it crystallographic group} $\Gamma$ is a discrete subgroup
of isometries of $\R^n$ acting properly discontinuously  and cocompactly on
$\R^n$. By the first Bieberbach theorem (see \cite{Ch}), every such group
has a normal subgroup $L$ of translations which is a uniform lattice of $\R^n$
and the holonomy group  $\Gamma/L$  is finite.

In \cite{AGPP},  there is  a complete structure theorem on the cohomology of
crystallographic groups with cyclic holomony of prime order. When such a group $\Gamma$
contains a torsion element, i.e. $\Gamma = \Z^n\rtimes \Z_p$, the theorem
asserts that the integral cohomology of  $\Gamma$ is given by  the cohomology
of $\Z_p$ with coefficients in the cohomology of the lattice $L$. Also, it is
conjectured that a similar decomposition holds for the cohomology of $\Gamma=
L\rtimes G$ for any  finite cyclic group $G$.

\begin{conj}[{\cite[5.2]{AGPP}}] \label{conj}
	Suppose that $G$ is a finite cyclic group and $L$ a finitely generated
	$\Z G$-lattice; then for any $m \geq 0$ we have
	\[
	\mathrm{H}^k (L \rtimes G, \Z) \cong \bigoplus_{i+j=k} \mathrm{H}^i (G, \mathrm{H}^j (L, \Z)).
	\]
\end{conj}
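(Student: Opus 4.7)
The natural approach is via the Lyndon--Hochschild--Serre spectral sequence of the split extension $1 \to L \to L \rtimes G \to G \to 1$, which has
$$E_2^{i,j} = \mathrm{H}^i(G, \mathrm{H}^j(L, \Z)) \Longrightarrow \mathrm{H}^{i+j}(L \rtimes G, \Z).$$
Since $L$ is $\Z$-free, $\mathrm{H}^j(L, \Z) \cong \Lambda^j \mathrm{Hom}(L, \Z)$ as a $\Z G$-module, so the conjecture reduces to two statements about this spectral sequence: every differential $d_r$ for $r \geq 2$ vanishes, and every extension problem in the associated filtration is trivial.

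The first move would be to exploit the splitting. A section $G \to L \rtimes G$ induces a retraction $\mathrm{H}^*(L \rtimes G, \Z) \twoheadrightarrow \mathrm{H}^*(G, \Z)$ of the inflation map, and forces the transgression $d_2 : E_2^{0,1} \to E_2^{2,0}$, which is cup product with the extension class in $\mathrm{H}^2(G, L)$, to vanish. Since $G$ is cyclic, $\mathrm{H}^*(G, \Z)$ is $2$-periodic and generated by a single degree-$2$ class, so I would try to leverage the multiplicative structure of the spectral sequence, together with the retraction, to propagate this vanishing to every higher differential. For the extension problem, the $\mathrm{H}^*(G, \Z)$-module structure on $\mathrm{H}^*(L \rtimes G, \Z)$ together with the lifts supplied by the retraction should, one hopes, allow each $E_\infty^{i,j}$ to be realised as a direct summand of $\mathrm{H}^{i+j}(L \rtimes G, \Z)$ rather than as a subquotient.

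The main obstacle is controlling the higher differentials $d_r$ for $r \geq 3$ when $m$ is composite. When $m = p$ is prime, the indecomposable $\Z G$-lattices form a short list, their cohomology with $\Z$-coefficients is transparent, and the argument of \cite{AGPP} closes the case. For composite $m$, however, $\Lambda^j \mathrm{Hom}(L, \Z)$ may involve indecomposables whose $G$-cohomology supports non-trivial secondary operations, and there is no evident reason the splitting should kill these; they can reappear as non-zero higher differentials. I expect the plan to break down precisely here, and testing small cases such as $m = 4$ should already reveal obstructions --- consistent with the abstract's announcement of a complete list of counterexamples up to dimension $5$.
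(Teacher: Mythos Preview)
The statement you were asked to prove is a \emph{conjecture} that the paper \emph{refutes}, not establishes. There is therefore no ``paper's own proof'' to compare against; the paper's contribution (Corollary~\ref{counter} and Section~\ref{sect:4and5}) is a list of explicit counterexamples, beginning with a $4$-dimensional group $\Z^4\rtimes\Z_4$ for which $\oH^4(\Gamma,\Z)=\Z_4^2$ while $\bigoplus_{i+j=4}\oH^i(G,\oH^j(L,\Z))=\Z_4\oplus\Z_2^3$. You seem to have realised this by the end of your write-up, since you correctly predict that the strategy should break down for $m=4$ and cite the abstract's announcement of counterexamples.

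Your outline of \emph{why} a proof along spectral-sequence lines cannot succeed is reasonable, and in particular your localisation of the obstruction to composite $m$ and higher differentials/extensions matches what the paper finds (see Remark~\ref{diff} for a case with a genuine nonzero differential, and the remark after Table~\ref{tab:homology} for a case where the sequence collapses but the extension problem is nontrivial). However, the paper does not argue abstractly about differentials at all: it builds an explicit free $\Z\Gamma$-resolution via Wall's twisted tensor product (Theorem~\ref{lem:Wall}), applies $\Hom_{\Z\Gamma}(-,\Z)$, reduces the resulting integer matrices to Smith normal form, and reads off both sides of the conjectured isomorphism directly. So rather than a failed proof strategy, what is actually on offer is a concrete computational disproof; your proposal should be reframed accordingly.
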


We show that the conjecture already fails for a $4$-dimensional
crystallographic groups with  holonomy $\Z_4$   (see  Corollary \ref{counter}). This is the lowest possible dimension of a crystallographic group for which the conjecture is
not true.  In Section 4, we also compute the cohomology of all $4$- and
$5$-dimensional crystallographic groups which do not satisfy the conjecture. There are 2 in dimension 4 both with holonomy
$\Z_4$  and  in dimension 5, there are 5 with holonomy $\Z_4$ and one with holonomy $\Z_8$.
As further applications of our methods, in Section 5, we finish the
computations of the cohomology of $6$-dimensional crystallographic groups
which arise as orbifold fundamental groups of certain toroidal orbifolds
discussed in {\cite[Sec.~6]{AGPP}}. Also, we give an example of  an 8-dimensional  crystallographic group with holonomy $\Z_9$ which is the first counterexample with odd order holonomy.

Our approach is straightforward, as we compute both sides of the conjectured
equation and immediately observe that they are not isomorphic. The method of
computations is based on the so-called {\it twisted tensor product}
construction introduced by Wall in \cite{Wall}. Roughly stated, given an
arbitrary group extension $1\to L\to \Gamma\to G\to 1$ and free resolutions
$B_*$ and $C_*$ of $\Z$ over $\Z L$ and $\Z G$ respectively,
by inducting $B_*$ to a resolution $\mbox{Ind}_L^{\Gamma}B_*$ over $\Z \Gamma$ and then tensoring with $C_*$ over $\Z G$, assuming trivial right action on $G$ on $\mbox{Ind}_L^{\Gamma}B_*$, one obtains an augmented chain complex of free $\Z\Gamma$-modules. Wall then proves that one can recursively construct new  differentials of the complex to obtain an acyclic complex.

This method of computing the cohomology of crystallographic groups has already
been implemented in GAP (see \cite[Package HAP]{GAP4}). In Sections 2 and 3, we discuss how  we adapt the algorithm to the case of crystallographic groups of split type. This shortens the computing times and allows us to find some counterexamples to the conjecture in dimensions 6 and 8.

Recently, using different methods, a 6-dimensional counterexample to the conjecture with holonomy $\Z_4$ has been found by Langer and L\"{u}ck (\cite[0.6]{Lu}). In fact, they show that there is a counterexample to the conjecture  for every holonomy group whose order is divisible by 4. They also verify the conjecture with an extra assumption that the action of $G$ on $L$ is free (\cite[0.5]{Lu}).

In \cite[1.2]{Pet}, it was conjecture that the Lyndon-Hochschild-Serre spectral sequence associated to $\Z^n\rtimes \Z_m$ collapses at $E_2$ not only with integral coefficients,  but more generally for  all coefficient modules $A$ that are  $\Z$-free of finite rank having trivial $\Z^n$-action. We can easily show that several of the counterexamples to Conjecture \ref{conj} are also counterexamples to this conjecture. In Section 4 (see Theorem 4.2), we present a $3$-dimensional counterexample  to Conjecture \cite[1.2]{Pet}. Interestingly, this is the lowest possible dimension for a crystallographic group with cyclic holonomy of split type whose  associated   Lyndon-Hochschild-Serre spectral sequence collapses with integral coefficients but does not collapse  for some other  coefficients $A$.

\section{The cruxes of the method}

Before presenting the main steps used in our computations, first we discuss two notions that
are essential to this method.

\subsection{Twisted tensor product} \label{sect:Wall}
Let $1 \to L \to \Gamma \to G \to 1$
be an arbitrary extension of groups. Suppose $(B_r, r\geq 0)$ and $(C_s, s\geq 0)$ are free $\Z L$ and $\Z G$-resolutions of $\Z$,
respectively and denote by $\partial_*$ the differential of $C_*$.

The induced module $\mbox{Ind}_L^{\Gamma}B_*=\Z \Gamma \otimes_{\Z L} B_*$ is free over $\Z\Gamma$.
Since  induction  is an exact functor, $\mbox{Ind}_L^{\Gamma}B_*$ with the differentials induced from those of $B_*$ becomes
  a free $\Z \Gamma$-resolution of $\Z G$.

Next, let us endow each module $\mbox{Ind}_L^{\Gamma}B_*$ with the trivial right $G$-action and define:
\begin{align*}
A_{r,s}&:= \mbox{Ind}_L^{\Gamma}B_r\otimes_{\Z G} C_s.\\
\intertext{Set $\alpha_s = \mbox{rk}_{\Z G}(C_s)$ and denote  by $\mbox{Ind}_L^{\Gamma}B$  the graded complex $\bigoplus_r \mbox{Ind}_L^{\Gamma}B_r$ and let  $\epsilon$ be its augmentation. Then}
D_s &:= \bigoplus_r A_{r,s}=\mbox{Ind}_L^{\Gamma}B \otimes_{\Z G} C_s\\
\intertext{is a direct sum of $\alpha_s$ copies of $\mbox{Ind}_L^{\Gamma}B$, which together with
augmentation $\epsilon_s:=(\Id \otimes_{\Z L} \epsilon)^{\alpha_s}$ onto $C_s$ entails a free $\Z \Gamma$-resolution of $C_s$.
Lastly, we denote by $d_0$  the differential of each complex $D_s$ and define:}
A&:= \bigoplus_s D_s =\bigoplus_{r,s} A_{r,s}
\end{align*}
 graded by $r+s$.

The following crucial result was proven in \cite{Wall}. In fact, its  proof will comprise the main steps of the algorithm which we will
discuss later.

\begin{thm}[{\cite[Lem.~2, Th.~1]{Wall}}] \label{lem:Wall}
	There exist $\Z \Gamma$-homomorphism $d_k \colon A_{r,s} \to A_{r+k-1,s-k}$ $(k \geq 1, s \geq
	k)$ such that
	\begin{enumerate}[$($i$)$]
\item $\epsilon_{s-1} d_1 = \partial \epsilon_s \colon A_{0,s} \to C_{s-1}$
\smallskip
\item $\sum_{i=0}^{k} d_i d_{k-i} = 0$, for each $k$, $($where $d_{k|A_{r,s}}$ is interpreted as zero if $r = k = 0$ or if $s < k.)$
\end{enumerate}
\smallskip
Moreover, with the differential $d=\sum_{k=0}^{\infty}d_k$, the complex
$(A,d)$ is acyclic and hence it yields a free  $\Z \Gamma$-resolution of $\Z$.
\end{thm}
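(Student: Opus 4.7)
The plan is to build the $d_k$ by induction on $k$ using the comparison theorem for projective resolutions, and then to deduce acyclicity of the total complex via a spectral-sequence argument. The differential $d_0$ on each column $D_s$ is already given and satisfies $d_0^2=0$, since $(D_s,d_0)\to C_s$ is a free $\Z\Gamma$-resolution of $C_s$ (with $C_s$ viewed as a $\Z\Gamma$-module via $\Gamma\to G$). For $d_1$, the map $\partial\colon C_s\to C_{s-1}$ is $\Z\Gamma$-linear, so the comparison theorem produces a $\Z\Gamma$-chain map $d_1\colon D_s\to D_{s-1}$ lifting $\partial$, i.e.\ satisfying $\epsilon_{s-1}d_1=\partial\epsilon_s$ together with $d_0d_1+d_1d_0=0$.

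For $k\geq 2$, suppose $d_0,\ldots,d_{k-1}$ have been built so that $\sum_{i+j=\ell}d_id_j=0$ for every $\ell<k$, and set the obstruction $f_k=\sum_{i=1}^{k-1}d_id_{k-i}\colon A_{r,s}\to A_{r+k-2,s-k}$. The inductive relations (together with the bi-graded signs) give $d_0f_k+f_kd_0=0$, so $f_k$ is a $d_0$-chain map between the $\Z\Gamma$-resolutions $(D_s,d_0)$ and $(D_{s-k},d_0)$. Restricted to $A_{0,s}$, each term $d_id_{k-i}$ either lands in strictly positive $r$-degree of $D_{s-k}$ (whenever $k\geq 3$) or, for $k=2$, reduces to $\epsilon_{s-2}d_1d_1=\partial^2\epsilon_s=0$; hence $f_k$ induces the zero map on $H_0(D_{s-k},d_0)=C_{s-k}$. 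The comparison theorem then yields $d_k\colon A_{r,s}\to A_{r+k-1,s-k}$ with $d_0d_k+d_kd_0=-f_k$, which is exactly the relation $\sum_{i+j=k}d_id_j=0$.

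For acyclicity, filter $A$ by the subcomplexes $F_pA=\bigoplus_{s\leq p,\,r}A_{r,s}$. Because $d_0$ preserves the filtration while each $d_k$ with $k\geq 1$ strictly decreases it, the associated spectral sequence has $E_0=A_{*,s}$ with differential $d_0$, giving $E_1=H_*(D_s,d_0)$ concentrated in $r=0$ and equal to $C_s$ there. The induced $d_1$-differential on $E_1$ is $\partial$ by construction, so $E_2=H_*(C_*,\partial)=\Z$ in bidegree $(0,0)$. The filtration is finite in each total degree (since $r+s=n$ forces $s\leq n$), so the spectral sequence converges and $H_*(A,d)=\Z$. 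Each $A_{r,s}$ being free over $\Z\Gamma$, $(A,d)$ is the desired free $\Z\Gamma$-resolution of $\Z$.

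The main technical obstacle lies in the inductive step at level $k$: one must verify simultaneously that $f_k$ is a $d_0$-chain map and that it induces zero on $H_0$, both of which are sign- and index-sensitive consequences of the relations for smaller $k$. Once these hold, the existence of $d_k$ is formal from the comparison theorem, and the acyclicity statement is a routine consequence of the convergence of a first-quadrant spectral sequence.
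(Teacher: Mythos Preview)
Your proof is correct and follows the same architecture as Wall's, which the paper cites without reproducing. One minor correction: the obstruction $f_k=\sum_{i=1}^{k-1}d_id_{k-i}$ satisfies $d_0f_k=f_kd_0$, not $d_0f_k+f_kd_0=0$; each $d_i$ anticommutes with $d_0$ by the level-$i$ relation, so a composite of two of them commutes with $d_0$. This is precisely what makes $f_k$ a chain map between the column resolutions, and your null-homotopy step then goes through unchanged.

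Where you invoke the comparison theorem abstractly, the paper (in Section~2.3, following Wall's Lemma~2) carries out the same induction explicitly via a fixed contracting homotopy $f$ on $\mathrm{Ind}_L^{\Gamma}B$: one sets
\[
d_k(\beta)=-f\Bigl(\sum_{i=1}^{k}d_id_{k-i}(\beta)\Bigr),
\]
the sum now including $i=k$ to encode the recursion on $r$, and the identity $d_0f+fd_0=\mathrm{Id}$ on $d_0$-cycles recovers exactly $d_0d_k+d_kd_0=-f_k$. So the two arguments coincide in substance; your comparison theorem is the existence statement, and the paper's contracting homotopy is the explicit witness, which the paper needs because its purpose is actual computation. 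Your spectral-sequence argument for acyclicity via the filtration by $s$ is the standard one (this is Wall's Theorem~1) and is not reproduced in the paper.
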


\subsection{Contracting homotopies} \label{sect:contr-hom}

Let $(Q,d)$ be an acyclic chain complex. It will be often
necessary to take preimages of $d$ for elements which
are in $\ker d=\im d$. A suitable
computational method for this is by using a contracting homotopy.
More about this approach could be found in \cite[Section~3]{El1}.

A {\it contracting homotopy} of an acyclic complex $Q$ is a chain map $h \colon
Q_i \to Q_{i+1}$ such that $hd + dh = \Id$.  Then for each  $y \in \ker d$, we have $dh(y) = y$. So $h$ maps such an element $y$ to its
preimage under $d$.

Contracting homotopies are often easy to construct.
To obtain a contracting homotopy for a $\Z[\Z^n]$-resolution $B$ of $\Z$,
we will use the standard formula given in \cite[p. 214]{CE},
which provides a contracting homotopy for a tensor
product of acyclic complexes equipped with contracting homotopies.

Let $L=\Z^n$ and $\Gamma=L\rtimes G$. We  need to explain how to define a contracting homotopy on the induced complex
$\mbox{Ind}_L^{\Gamma}B$ from a given contracting homotopy $h$ on  $B$. Since every element of $\mbox{Ind}_L^{\Gamma}B$ can be written as a direct sum of elements of the form $(1, g) \otimes_{\Z L} y$ for $(1, g)\in \Gamma$ and $y\in B$, we define the contracting homotopy  by:
	\[
	f:\mbox{Ind}_L^{\Gamma}B\to \mbox{Ind}_L^{\Gamma}B,	\quad (1, g)\otimes_{\Z L} y\mapsto (1, g) \otimes_{\Z L} h(y).
	\]

\subsection{The steps involved}\label{steps}

We are now ready to describe the key steps of the algorithm used to compute the cohomology of an $n$-dimensional crystallographic
group $\Gamma=L\rtimes G$. The reader may find it helpful to refer to the next section where we explicitly implement these steps in a specific example.

To obtain the free $\Z L$-resolution $B_*$ of $\Z$, we  tessellate $\R^n$ into standard $n$-cubes of length 1. This defines an
$L$-equivariant CW-structure on $\R^n$ and the associated chain complex yields the desired resolution.

We denote by $t_i$ for $1\leq i \leq n$ the generators of $L=\Z^n$ which correspond to translations by 1 in the coordinate $i$.
We denote by $e$ the origin of $\R^n$, by $e_i$   for $1 \leq i \leq n$ the
	1-dimensional segment from $e$ to $t_ie$, and  by $e_{i_1i_2\ldots i_m}$ the $m$-dimensional cube spanned by $e_{i_1},
	e_{i_2} \ldots e_{i_m}$. Then
	\[B_m = \gen{e_{i_1\ldots i_m}, 1 \leq i_1 < \cdots < i_m \leq n}_{\Z L}
		\quad \text{ for }\quad 0 \leq m \leq n
	\]
and the differentials of $B_*$, denoted by $d^{\B}_*$, are given by:
	\[
	d^{\B}_m(e_{i_1\ldots i_m}) = \sum_{j=1}^{m} (-1)^{j-1} (t_{i_j} - 1)
	e_{i_1\ldots \hat{i_j}\ldots i_m}.
	\]

Next, we need a free $\Z G$-resolution $C_*$ of $\Z$.

\begin{rem}\label{rem1}{\rm In our computations, the holonomy  will always be a finite
	cyclic group, i.e. $G=\langle x \; | \; x^q=1\rangle$. In this case, we will
	take for $C_*$  the standard $2$-periodic resolution   $C_i=\Z G$ for all $i\geq 0$
	and $\partial_{i+1}: C_{i+1} \xrightarrow{x-1} C_{i}$ when $i$ is even and
	$\partial_{i+1}: C_{i+1} \xrightarrow{x^{q-1}+\cdots + x+1} C_{i}$ when $i$
	is odd.}
\end{rem}

Now, using twisted tensor product construction, we obtain a free $\Z \Gamma$-resolution $(A,d)$ of $\Z$ as follows:

\begin{enumerate}[(i)]
\item As discussed, we construct the resolutions $(C_*, \partial)$ and $(\mbox{Ind}_L^{\Gamma}B_*, \mbox{Id}\otimes_{\Z L}d^{\B}_*)$, and free $\Z\Gamma$-modules $A_{r,s}$ for each $0\leq r\leq n$ and $0\leq s$ and set $A_m=\bigoplus_{r+s=m} A_{r,s}$.
\smallskip
\item For $n=1$, we define a contracting homotopy ${}^1h$ on the $\Z[\Z]$-free resolution $(B_*, d^{\B}_*, n=1)$
 by ${}^1h(1)=e$ and
\[
	{}^1h(t_1^je)=
	\begin{cases}
		\quad \sum_{i=0}^{j-1} t_1^i e_{i_1\ldots i_m} &  j >
		0\\
		- \sum_{i=1}^{-j} t_1^{-i} e_{i_1\ldots i_m} & \text j < 0 \\
		0 &  j=0.
	\end{cases}
	\]
For each $k\geq 1$, since $(B_*, d^{\B}_*, n=k+1)$ is isomorphic to the tensor product of $(B_*, d^{\B}_*, n=k)$ and the above resolution, we can and will  define a contracting homotopy $h:B\to B$ by the recursive formula (see \cite[p. 214]{CE}):
$${}^{k+1}h={}^{k}h\otimes \iota + ({}^{k}h\epsilon)\otimes {}^1h,$$ where $\iota$ is the identity map on $(B_*, d^{\B}_*, n=1)$ and ${}^{k}h$ is the
contracting homotopy on $(B_*, d^{\B}_*, n=k)$.

\smallskip
\item Let $r=0$ and $\beta$ be a generator of $A_{0,s}$.
We define $d_1(\beta)=f(\partial(\epsilon_s(\beta)))\subseteq A_{0,s-1}$.
\end{enumerate}
\smallskip

\noindent For $r = 1$, we have that
$\epsilon_{s-1} d_1 d_0 = \partial \epsilon_s d_0 = 0$. Hence, $d_1 d_0:A_{1,s}\to A_{0,s-1}$ maps into
$\ker \epsilon_s = \im d_0$.
\smallskip

\begin{enumerate}[(i)]
\setcounter{enumi}{3}
\item So, for any generator $\beta\in A_{1,s}$,
we define $d_1(\beta)=-f(d_1(d_0(\beta)))$.
Similar occurs for $r \geq 2$ and
for any generator $\beta\in A_{2,s}$, we define
$d_1(\beta)=-f(d_1(d_0(\beta)))$.

\end{enumerate}
\smallskip
\noindent  For $k \geq 2$ we need to define $d_k$
which satisfy the equation $\sum_{i=0}^k d_i d_{k-i} = 0$.
Suppose, we defined $d_i$ for $i<k$ and $d_k|A_{r-1,s}$ satisfying this
property. It is not difficult to check that  $\sum_{i=1}^{k} d_i d_{k-i}$  
 is in $\ker d_0 = \im d_0$ (see Lemma 2 of \cite{Wall}).
\smallskip
\begin{enumerate}[(i)]
\setcounter{enumi}{4}
\item Then, for a generator $\beta\in A_{r,s}$ we take $d_k(\beta)=-f( \sum_{i=1}^{k} d_i d_{k-i})(\beta)$.
\end{enumerate}
\smallskip
\noindent This yields the free $\Z \Gamma$-resolution $(A,d)$. To calculate the cohomology of $\Gamma$ we:
\smallskip
\begin{enumerate}[(i)]
\setcounter{enumi}{5}
\item apply the functor $\mbox{Hom}_{\Z\Gamma}(-,\Z)$  to $(A,d)$ to obtain a cochain complex of finitely generated $\Z$-free modules $(F, \delta)$.
\smallskip
\item For each $0\leq i\leq n+1$, reduce the matrix representing the boundary map $\delta_i:F_i\to F_{i+1}$ to Smith normal form and read off the cohomology group $\mathrm{H}^{i+1}(\Gamma)$ via the isomorphism: \[F_{i+1}/\mbox{Im}\delta_i\cong \mathrm{H}^{i+1}(\Gamma)\oplus \mbox{Im}\delta_{i+1}.\]
      \end{enumerate}
\smallskip
\begin{rem}\label{rem2}{\rm When $G$ is a cyclic group and $C_*$ is its standard 2-periodic resolution, since the resolution $(B, d^{\B})$ has length $n$,  one can easily observe  that
	the resolution $(A,d)$ will also be 2-periodic starting from dimension $n+1$. So, in all the steps we can stop the computaions once we reach this dimension.}
\end{rem}

\section{A counterexample} \label{sect:counter}
In this section, we provide a counterexample to Conjecture \ref{conj},
by applying the computational steps of Section 2.

	Let $\Gamma$ be a $4$-dimensional crystallographic group $L \rtimes G$ where $G=\langle \M \;|\; \M ^4=1\rangle$ is the
	cyclic of order 4  acting on $L =\Z^4$  by a left multiplication given  by the matrix:
\smallskip

\begin{equation} \label{counter:matrix}
M = {\footnotesize\begin{bmatrix}
 0 & 1 & 0 & 0 \\
-1 & 0 & 0 & 1 \\
 0 & 0 & -1 & 1 \\
 0 & 0 & 0 & 1
\end{bmatrix}}\end{equation}
\smallskip

\begin{prop}
	The integral cohomology of $\Gamma$ is as follows:
\smallskip
\begin{eqnarray*}
\mathrm{H}^i(\Gamma) &=& \left\{ \begin{array}{lll}
 \Z  & i=1\\
	   \Z \oplus \Z_4 \oplus \Z_2  & i=2\\
		    \Z \oplus \Z_4 \oplus \Z_2  & i=3\\
		  \Z_4^2   & i=2k, & k\geq 2\\
	 \Z_2^4   & i=2k+1, & k\geq 2
                   \end{array}\right.
\end{eqnarray*}
\end{prop}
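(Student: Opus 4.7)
The plan is to execute the seven computational steps listed in Section~\ref{steps} for the specific extension $\Gamma=\Z^4\rtimes\Z_4$ determined by the matrix $M$ in~\eqref{counter:matrix}, and then to read the cohomology off the Smith normal forms of the resulting coboundary matrices. First, I would take the cubical $\Z L$-resolution $B_*$ of length~$4$, with $B_r$ of $\Z L$-rank $\binom{4}{r}$ and differential $d^{\B}_*$ as prescribed, and for $C_*$ the standard $2$-periodic $\Z G$-resolution of Remark~\ref{rem1} with $G=\langle \M\mid \M^4=1\rangle$. The bigraded modules $A_{r,s}=\mbox{Ind}_L^{\Gamma}B_r\otimes_{\Z G} C_s$ then have $\Z\Gamma$-rank $\binom{4}{r}$, giving total-degree ranks $1,5,11,15,16,16,\ldots$; by Remark~\ref{rem2} the resolution is $2$-periodic from dimension~$5$ onward, so the computation reduces to handling finitely many coboundary matrices.

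Next I would assemble the contracting homotopy ${}^{4}h$ on $B$ by four iterations of the recursive formula ${}^{k+1}h={}^{k}h\otimes\iota+({}^{k}h\epsilon)\otimes{}^1h$ starting from the explicit ${}^1h$ in Section~\ref{steps}, and extend it to $f$ on $\mbox{Ind}_L^{\Gamma}B$ as in Section~\ref{sect:contr-hom}. Simultaneously I would record the action of $\M$ on each generator $e_{i_1\ldots i_m}$ of $\mbox{Ind}_L^{\Gamma}B$: this is dictated by the action $M$ on $L$, since $\M\cdot t_i \cdot \M^{-1}$ is the translation read off from the columns of $M$, which is precisely what is needed to push a generator back to the origin.

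Having set this up, I would compute the twisted differentials in order of increasing $k$, using $d_1(\beta)=f(\partial\epsilon_s(\beta))$ on generators of $A_{0,s}$, $d_1(\beta)=-f(d_1 d_0(\beta))$ on generators of $A_{r,s}$ with $r\geq 1$, and inductively $d_k(\beta)=-f\bigl(\sum_{i=1}^{k}d_i d_{k-i}(\beta)\bigr)$ for $k\geq 2$. Because $d_k\colon A_{r,s}\to A_{r+k-1,s-k}$ and $B_r=0$ for $r>4$, only $d_k$ with $k\leq 5-r$ can contribute in any given bidegree, so the recursion terminates quickly. Then applying $\mbox{Hom}_{\Z\Gamma}(-,\Z)$ gives a cochain complex of free $\Z$-modules of ranks $1,5,11,15,16,16,\ldots$, and reducing each $\delta_i$ to Smith normal form yields $\mathrm{H}^{i+1}(\Gamma)$ from $F_{i+1}/\mbox{Im}\,\delta_i$ after separating the $\mbox{Im}\,\delta_{i+1}$ factor. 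The stable pattern $\mathrm{H}^{2k}=\Z_4^2$, $\mathrm{H}^{2k+1}=\Z_2^4$ for $k\geq 2$ then follows from the $2$-periodicity of $(A,d)$ beyond dimension~$5$, so it suffices to carry the Smith-normal-form reduction out to $\delta_5$.

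I expect the main obstacle to be the bookkeeping for the higher differentials $d_k$: each value $d_k(\beta)$ is a sum of terms $(1,g)\otimes_{\Z L}e_J$ whose count grows through the recursion, and any error propagates through all later $d_k$'s; in practice this is where a careful, preferably computer-assisted, implementation of the algorithm becomes indispensable. Equally delicate is the fact that the $\Z_4$ and $\Z_2$ summands in $\mathrm{H}^2$ and $\mathrm{H}^3$ can only be separated once every entry of the integer matrices representing $\delta_1,\ldots,\delta_5$ has been computed correctly, so the correctness of the stated answer hinges on the accurate assembly of all $d_k$'s before any Smith-normal-form reduction is attempted.
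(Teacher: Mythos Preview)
Your proposal is correct and follows exactly the approach taken in the paper: the cubical $\Z L$-resolution, the $2$-periodic $\Z G$-resolution, the recursively built contracting homotopy, Wall's twisted differentials $d_k$, and then Smith normal forms of the resulting coboundary matrices. The paper carries this out in full, writing down $h$, all of $d_1$ and $d_2$ explicitly (and verifying $d_k\equiv 0$ for $k\geq 3$), and tabulating the Smith normal forms through the first period; note only that to capture both parities of the stable differential you actually need the reduction through $\delta_6$, not just $\delta_5$.
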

\smallskip
\begin{proof} Let $(B,d^{\B})$ be the  free $\Z L$-resolution  of $\Z$ defined  in \ref{steps} for $n=4$.

 To simplify the notation we introduce the symbol:
	\[
	\mathfrak{C}(j, t_k, e_{i_1\ldots i_m}) :=
	\begin{cases}
		\quad \sum_{i=0}^{j-1} t_k^i e_{i_1\ldots i_m} &  j >
		0\\
		- \sum_{i=1}^{-j} t_k^{-i} e_{i_1\ldots i_m} & \text j < 0 \\
		0 &  j=0
	\end{cases}
	\]
	Then, the contracting homotopy $h\colon B \to B$ of the augmented resolution $B$  (where we set $B_{-1}=\Z$)
	is given by:
\begin{center}\begin{align*}	h(1) &= e, \\
h(t^i_1t^j_2t^k_3t^l_4 e) & =
		t_2^jt_3^kt_4^l \mathfrak{C}(i, t_1, e_1) + t_3^kt_4^l \mathfrak{C}(j, t_2, e_2)  \\
		& \quad + t_4^l \mathfrak{C}(k, t_3, e_3) + \mathfrak{C}(l, t_4, e_4), \\	
	\begin{split}
		h(t^i_1t^j_2t^k_3t^l_4 e_1) & = 0
	\end{split}, \\
	\begin{split}
		h(t^i_1t^j_2t^k_3t^l_4 e_2) & = t_2^jt_3^kt_4^l \mathfrak{C}(i,t_1,e_1e_2),
	\end{split} \\
	\begin{split}
		h(t^i_1t^j_2t^k_3t^l_4 e_3) & =
		t_2^jt_3^kt_4^l \mathfrak{C}(i, t_1, e_1e_3) + t_3^kt_4^l \mathfrak{C}(j, t_2, e_2e_3)
	\end{split}, \\
	\begin{split}
	h(t^i_1t^j_2t^k_3t^l_4 e_4) & =
		t_2^jt_3^kt_4^l \mathfrak{C}(i, t_1, e_1e_4) + t_3^kt_4^l \mathfrak{C}(j, t_2, e_2e_4), \\
		&\quad  + t_4^l \mathfrak{C}(k, t_3, e_3e_4),
	\end{split}\\
\begin{split}
h(t^i_1t^j_2t^k_3t^l_4 e_1e_2) & = h(t^i_1t^j_2t^k_3t^l_4 e_1e_3) = h(t^i_1t^j_2t^k_3t^l_4 e_1e_4) = 0 \\
		h(t^i_1t^j_2t^k_3t^l_4 e_2e_3) & = t_2^jt_3^kt_4^l \mathfrak{C}(i, t_1, e_1e_2e_3), \\
		h(t^i_1t^j_2t^k_3t^l_4 e_2e_4) & = t_2^jt_3^kt_4^l \mathfrak{C}(i, t_1, e_1e_2e_4), \\
		h(t^i_1t^j_2t^k_3t^l_4 e_3e_4) & =
		t_2^jt_3^kt_4^l \mathfrak{C}(i, t_1, e_1e_3e_4)\\
		&\quad + t_3^kt_4^l \mathfrak{C}(j, t_2, e_2e_3e_4),
\end{split}\\
	    h(t^i_1t^j_2t^k_3t^l_4 e_1e_2e_3) & = h(t^i_1t^j_2t^k_3t^l_4 e_1e_2e_4) = h(t^i_1t^j_2t^k_3t^l_4 e_1e_3e_4) = 0,\\
h(t^i_1t^j_2t^k_3t^l_4 e_2e_3e_4) & = t_2^jt_3^kt_4^l \mathfrak{C}(i, t_1, e_1e_2e_3e_4). \\
	\end{align*}\end{center}

Next, we set $x = (1, \M)\in L\rtimes G$
and consider the standard $\Z G$-resolution $(C,\partial)$ defined in Remark \ref{rem1}:
	\begin{equation} \label{Z4:res}
		\cdots \rightarrow \Z[x]/(x^4-1) \xrightarrow{x^3+x^2+x+1}\Z[x]/(x^4-1)\xrightarrow{x-1}\Z[x]/(x^4 - 1) \rightarrow
	\Z \rightarrow 0.
	 \end{equation}
Since, every $C_i$ is 1-generated as a $\Z G$-module,  by construction: $$D_s =\bigoplus_r {A_{r,s}}\cong\mbox{Ind}_L^{\Gamma}B$$  for each $s\geq 0$.

Occasionally, we will add a superscript to generators of $B$ to denote
	to which $A_{r,s}$ they belong, i.e.
	\[
	1 \otimes_L e^s_{i_1i_2\ldots i_r} \in A_{r,s} \quad \text{ for } r,s \geq 0, 1
	\leq i_1 < \ldots < i_r \leq 4
	\]
	and from now on, we will simplify the notation by setting:
	\[
	ge^s_{i_1\ldots i_r} := g \otimes_L e^s_{i_1\ldots i_r}.
	\]

Now, using the recursive steps (iii)-(v) discussed in Section 2.3, we  compute the maps $d_k \colon A_{r,s} \to A_{r+k-1,
	s-k}$ of Theorem \ref{lem:Wall}.

Let $s\geq 1$. Then,

	\[
	\begin{split}
		d_1(e^{2s-1}) & = (x-1)e \\
		d_1(e^{2s}) & = (x^3+x^2+x+1)e \\
	\end{split}
	\]

	\[
	\begin{split}
		d_1(e_1^{2s-1}) & = e_1 - x e_2 \\
		d_1(e_2^{2s-1}) & = x t_1^{-1}t_4e_1 + e_2 - x e_4 \\
		d_1(e_3^{2s-1}) & = x t_3^{-1}t_4 e_3 + e_3 - x e_4 \\
		d_1(e_4^{2s-1}) & = -x e_4 + e_4 \\
	\end{split}
	\]
	\[
	\begin{split}
		d_1(e_1^{2s}) & =
		x^2 t_1^{-1}t_4 e_1  - e_1 + x^3 t_2^{-1}t_4 e_2 - x e_2 - x^3 e_4 - x^2 e_4 \\
		d_1(e_2^{2s}) & =
		- x^3 e_1 + x t_1^{-1}t_4 e_1 + x^2 t_2^{-1}t_4 e_2 - e_2 - x^2 e_4 - x e_4 \\
		d_1(e_3^{2s}) & =
		 x^3 t_3^{-1}t_4 e_3-x^2 e_3 + x t_3^{-1}t_4 e_3 - e_3 -x^3 e_4 -x e_4 \\
		d_1(e_4^{2s}) & = -(x^3 + x^2 + x + 1) e_4 \\
	\end{split}
	\]
	\[
	\begin{split}
		d_1(e_{12}^{2s-1}) & =
		x t_1^{-1}t_4 e_{12} - e_{12}  + x e_{24}\\
		d_1(e_{13}^{2s-1}) & =
		 - e_{13} - x t_3^{-1}t_4 e_{23} + x e_{24}
		\\
		d_1(e_{14}^{2s-1}) & =
		 - e_{14} + x e_{24}
		\\
		d_1(e_{23}^{2s-1}) & =
		x t_1^{-1}t_3^{-1}t_4^2 e_{13} - x t_1^{-1}t_4 e_{14} - e_{23} + x t_3^{-1}t_4 e_{34}
		\\
		d_1(e_{24}^{2s-1}) & =
		- x t_1^{-1}t_4 e_{14} - e_{24}
		\\
		d_1(e_{34}^{2s-1}) & =
		-x t_3^{-1}t_4 e_{34} - e_{34}
		\\
	\end{split}
	\]
	\[
	\begin{split}
		d_1(e_{12}^{2s}) & =
		  x^3 t_2^{-1}t_4 e_{12}
		+ x^2 t_1^{-1}t_2^{-1}t_4^2 e_{12}
		+ x t_1^{-1}t_4 e_{12}
		+ e_{12} \\
		& \quad
		- x^3  e_{14}
		- x^2 t_1^{-1}t_4 e_{14}
		+ x^2 t_2^{-1}t_4 e_{24}
		+ x e_{24}
		\\
		d_1(e_{13}^{2s}) & =
		- x^2 t_1^{-1}t_4 e_{13}
		+ e_{13}
		+ x^3 t_2^{-1}t_3^{-1}t_4^2 e_{23}
		- x t_3^{-1}t_4 e_{23} \\
		& \quad
		- x^3 t_2^{-1}t_4 e_{24}
		+ x e_{24}
		+ x^3 t_3^{-1}t_4 e_{34}
		- x^2 e_{34}
		\\
		d_1(e_{14}^{2s}) & =
		- x^2 t_1^{-1}t_4 e_{14}
		+  e_{14}
		- x^3 t_2^{-1}t_4 e_{24}
		+ x  e_{24}
		\\
		d_1(e_{23}^{2s}) & =
		- x^3 t_3^{-1}t_4 e_{13}
		+ x t_1^{-1}t_3^{-1}t_4^2 e_{13}
		+ x^3 e_{14}
		- x t_1^{-1}t_4 e_{14} \\
		& \quad
		- x^2 t_2^{-1}t_4 e_{23}
		+ e_{23}
		- x^2 e_{34}
		+ x t_3^{-1}t_4 e_{34}
		\\
		d_1(e_{24}^{2s}) & =
		- x^2 t_2^{-1}t_4 e_{24}
		+ e_{24}
		+ x^3 e_{14}
		- x t_1^{-1}t_4 e_{14}
		\\
		d_1(e_{34}^{2s}) & =
		- x^3 t_3^{-1}t_4 e_{34}
		+ x^2 e_{34}
		- x t_3^{-1}t_4 e_{34}
		+ e_{34}
		\\
	\end{split}
	\]

	\[
	\begin{split}
		d_1(e^{2s-1}_{123}) & =
		  x t_1^{-1}t_3^{-1}t_4^2 e_{123}
		+ e^{2s-2}_{123}
		- x t_1^{-1}t_4 e_{124}
		- x t_3^{-1}t_4 e_{234}
		\\
		d_1(e^{2s-1}_{124}) & =
		- x t_1^{-1}t_4 e_{124}
		+ e_{124}
		\\
		d_1(e^{2s-1}_{134}) & =
		  e_{134}
		+ x t_3^{-1}t_4 e_{234}
		\\
		d_1(e^{2s-1}_{234}) & =
		- x t_1^{-1}t_3^{-1}t_4^2 e_{134}
		+ e_{234}
		\\
	\end{split}
	\]

	\[
	\begin{split}
		d_1(e^{2s}_{123}) & =
		  x^3 t_2^{-1}t_3^{-1}t_4^2 e_{123}
		- x^2 t_1^{-1}t_2^{-1}t_4^2 e_{123}
		+ x t_1^{-1}t_3^{-1}t_4^2 e_{123}
		- e_{123} \\
		& \quad
		- x^3 t_2^{-1}t_4 e_{124}
		- x t_1^{-1}t_4 e_{124}
		+ x^3 t_3^{-1}t_4 e_{134}
		- x^2 t_1^{-1}t_4 e_{134} \\
		& \quad
		+ x^2 t_2^{-1}t_4 e_{234}
		- x t_3^{-1}t_4 e_{234}
		\\
		d_1(e^{2s}_{124}) & =
		- x^3 t_2^{-1}t_4 e_{124}
		- x^2 t_1^{-1}t_2^{-1}t_4^2 e_{124}
		- x t_1^{-1}t_4 e_{124}
		- e_{124}
		\\
		d_1(e^{2s}_{134}) & =
		  x^2 t_1^{-1}t_4 e_{134}
		- e^{2s-1}_{134}
		- x^3 t_2^{-1}t_3^{-1}t_4^2 e_{234}
		+ x t_3^{-1}t_4 e_{234}
		\\
		d_1(e^{2s}_{234}) & =
		  x^3 t_3^{-1}t_4 e_{134}
		- x t_1^{-1}t_3^{-1}t_4^2 e_{134}
		+ x^2 t_2^{-1}t_4 e_{234}
		- e_{234}
		\\
	\end{split}
	\]

	\[
	\begin{split}
		d_1(e^{2s-1}_{1234}) & =
		(- x t_1^{-1}t_3^{-1}t_4^2 
		- 1)  e_{1234} \\
		d_1(e^{2s}_{1234}) & =
		(- x^3 t_2^{-1}t_3^{-1}t_4^2 
		+ x^2 t_1^{-1}t_2^{-1}t_4^2 
		- x t_1^{-1}t_3^{-1}t_4^2 
		+ 1 ) e_{1234}
	\end{split}
	\]

\smallskip

\noindent For $d_2$ we obtain:
	\[
		d_2(e^{s}) = 0
	\]

	\[
	\begin{split}
		d_2(e^{2s}_1) & = e_{14} \\
		d_2(e^{2s}_2) & = x^3 e_{14} \\
		d_2(e^{2s}_3) & = x^2 e_{34} + e_{34} \\
		d_2(e^{2s}_4) & = 0 \\
	\end{split}
	\]

	\[
	\begin{split}
		d_2(e^{2s+1}_i) & = 0 \quad \text{ for } i = 1,4\\
		d_2(e^{2s+1}_2) & = x^3 e_{14} + e_{24} \\
		d_2(e^{2s+1}_3) & = x^2 e_{34} + e_{34} \\
	\end{split}
	\]

	\[
	\begin{split}
		d_2(e^{2s}_{i4}) & = 0 \quad \text{ for } i=1,2,3 \\
		d_2(e^{2s}_{12}) & =
		- x^3 t_2^{-1}t_4 e_{124}
		- e_{124}
		\\
		d_2(e^{2s}_{13}) & =
		  x^2 t_1^{-1}t_4 e_{134}
		- t_4 e_{134}
		- e_{134}
		\\
		d_2(e^{2s}_{23}) & =
		  x^3 t_3^{-1}t_4 e_{134}
		+ x^2 t_2^{-1}t_4 e_{234}
		- e_{234}
		\\
	\end{split}
	\]
	\[
	\begin{split}
		d_2(e^{2s+1}_{i4}) & = 0 \quad \text{ for } i=1,2,3 \\
		d_2(e^{2s+1}_{12}) & =
		- x^3 t_2^{-1}t_4 e_{124}
		- e_{124}
		\\
		d_2(e^{2s+1}_{13}) & =
		x^2 t_1^{-1}t_4 e_{134}
		- e_{134}
		\\
		d_2(e^{2s+1}_{23}) & =
		  x^3 t_3^{-1}t_4 e_{134}
		+ x^2 t_2^{-1}t_4 e_{234}
		- t_4 e_{234}
		- e_{234}
		\\
	\end{split}
	\]
	\[
	\begin{split}
		d_2(e^{s+1}_{123}) & =
		(- x^3 t_2^{-1}t_3^{-1}t_4^2 
		+ x^2 t_1^{-1}t_2^{-1}t_4^2 
		+ t_4 
		+ 1)e_{1234}
		\\
		d_2(e^{s}_{i_1i_2i_3}) & = 0 \quad \text{ for } (i_1,i_2,i_3)\neq(1,2,3)
		\\
	\end{split}
	\]
	\[
		d_2(e^s_{1234}) = 0
	\]
	\[
		d_k \equiv 0 \quad \text{ for } k \geq 3.
	\]

Applying the functor $\Hom_{\Z \Gamma}(-,\Z)$ to  the resolution $(A,d)$, we obtain a complex $(F, \delta)$ with dimensions:
\begin{eqnarray*}
\dim(F_i) &=& \left\{ \begin{array}{lll}
1 & i =0\\
5 & i =1\\
11 & i =2\\
15 & i = 3\\
16  & i \geq 4.\end{array}\right.
\end{eqnarray*}
\noindent After numbering the generators of $F$ in lexicographical order, we determine the matrices representing the differentials and reduce them to Smith normal form (SNF):

\begin{center}
	\label{tab:SNF}
\begin{tabular}{|c|c|}
	\hline
	 & Diagonal of SNF  \\
	\hline
	\hline
$\delta_1$
& $[0]$ \\
	\hline
$\delta_2$
& $[1,1,2,4,0]$ \\
	\hline
$\delta_3$
& $[1,1,1,1,2,4,0,0,0,0,0]$ \\
		\hline
$\delta_4$
& $[1,1,1,1,1,1,4,4,0,0,0,0,0,0,0]$\\
	\hline
$\delta_{2k-1}$, $k\geq 3$
&  $[1,1,1,1,2,2,2,2,0,0,0,0,0,0,0,0]$ \\
	\hline
$\delta_2k$, $k\geq 3$
& $[1,1,1,1,1,1,4,4,0,0,0,0,0,0,0,0]$ \\
	\hline
\end{tabular}
\end{center}

\noindent Using step (vii), we finish the computations of the cohomology of $\Gamma$.\end{proof}

Next, we compute the right hand side of the conjectured equation in \ref{conj} for the group $\Gamma$.
\begin{prop} The following holds.
\[
\oH^i(G, \oH^j(L, \Z)) =
\begin{cases}
	\Z &  0 \le j \le 3, i=0 \\
	\Z_2 & j=1,3, i \ge 1 \\
	\Z_2 &  j = 2, i \ge 1, 2 \divides i \\
	\Z_2 &  j = 4, 2 \not\divides i \\
	\Z_4 &   j = 0, i \ge 1, 2 \divides i \\
	\Z_4 &   j = 2, 2 \not\divides i\\
	0 & \text{otherwise}
\end{cases}
\]
\end{prop}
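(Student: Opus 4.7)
The plan is to identify $\oH^j(L, \Z)$ with the exterior power $\Lambda^j L^{\ast}$, where $L^{\ast} = \Hom(L, \Z)$, and then compute each $\oH^i(G, \Lambda^j L^{\ast})$ using the standard $2$-periodic resolution of the cyclic group $G = \Z_4$ from Remark \ref{rem1}. That resolution gives
\[
\oH^0(G, N) = N^G, \quad \oH^{2k}(G, N) = N^G/\mathrm{Nm}(N), \quad \oH^{2k-1}(G, N) = \ker\mathrm{Nm}/(g-1)N
\]
for $k \geq 1$ and any $\Z G$-lattice $N$, where $g$ is a generator of $G$ and $\mathrm{Nm} = 1+g+g^2+g^3$.

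First I would treat the two extreme cases. A direct expansion gives $\det M = -1$, so $\Lambda^4 L^{\ast} \cong \Z$ with $G$ acting by the sign character; together with the trivial module $\Lambda^0 L^{\ast} = \Z$, this yields the rows $j = 0$ and $j = 4$ of the table from the well-known Tate cohomology of $\Z_4$ with trivial, respectively sign, coefficients.

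For $j = 1, 2, 3$, the group $G$ acts on $L^{\ast}$ by the contragredient matrix $M^{-T}$ and hence on $\Lambda^j L^{\ast}$ by $\Lambda^j M^{-T}$. I would write these matrices explicitly, form $A_j := \Lambda^j M^{-T} - I$ and $N_j := I + \Lambda^j M^{-T} + (\Lambda^j M^{-T})^2 + (\Lambda^j M^{-T})^3$, and reduce each to Smith normal form. The groups $\oH^i(G, \Lambda^j L^{\ast})$ are then read off as $\ker A_j$ in degree $0$, $\ker N_j/\im A_j$ in odd positive degrees, and $\ker A_j/\im N_j$ in even positive degrees. The case $j = 3$ admits a useful cross-check via the $\Z G$-module isomorphism $\Lambda^3 L^{\ast} \cong L \otimes \Lambda^4 L^{\ast}$, which identifies it with $L$ twisted by the sign character of $G$ and so relates it to the known action of $G$ on $L$.

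The main obstacle is the $j = 2$ case, where $\Lambda^2 L^{\ast}$ has rank $6$. Here one must carefully compute the invariant factors of $N_2$ and $A_2$ on a $6$-dimensional lattice, and verify in particular that $\ker N_2/\im A_2 \cong \Z_4$ while $\ker A_2/\im N_2 \cong \Z_2$. The asymmetry between these two groups is precisely what produces the discrepancy with $\oH^{\ast}(\Gamma)$ of Proposition $3.1$ and hence the counterexample to Conjecture \ref{conj}.
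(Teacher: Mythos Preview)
Your approach is essentially the same as the paper's: identify $\oH^j(L,\Z)$ with $\Lambda^j L^\ast$, let the generator of $G$ act via the induced exterior power of the dual matrix, and read off the cohomology from the Smith normal forms of the ``$g-1$'' and norm maps coming from the $2$-periodic resolution. The only cosmetic difference is that the paper uses $M^T$ rather than $M^{-T}$ (an irrelevant change of generator of $G$) and actually records the explicit $4\times 4$ and $6\times 6$ action matrices together with their Smith normal forms, whereas your write-up stops at the plan; to turn it into a proof you still need to exhibit those invariant factors, especially the $[1,1,1,1,4,0]$ for $A_2$ and $[2,0,0,0,0,0]$ for $N_2$ that produce the crucial $\Z_4$/$\Z_2$ asymmetry at $j=2$.
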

%
%

\begin{proof}
	Note that $\mathrm{H}^1(L, \Z) \cong \mbox{Hom}(L, \Z) \cong \Z^4$. Let it be generated by $t_i, 1 \leq i \leq 4$.
	We interpret $\mathrm{H}^j(L, \Z)$ as $j$-th exterior power $\Lambda^j(\mathrm{H}^1(L, \Z))$
	with generators $t_{i_1\ldots i_j} := t_{i_1} \wedge \ldots
	\wedge t_{i_j}$ for $1 \leq i_1 < \cdots < i_j \leq 4$.

	The action of $G$ on
	$\oH^j(L, \Z)$
	is given by:
	\[
	 t_{i_1 \ldots i_j} \cdot \M  = t_{i_1} M^T \wedge  t_{i_2} M^T
	\wedge \ldots  \wedge   t_{i_j} M^T
	\]

\noindent Arranging generators of $\oH^j(L, \Z)$ in lexicographical order,
	 we obtain the following matrices for the action
	of $\M$ on $\mathrm{H}^j(L, \Z)$:
	\[
	\begin{split}
\smallskip
	j = 1 : &
	\footnotesize{\begin{bmatrix*}[r]
   0 & -1 &  0 &  0 \\
   1 &  0 &  0 &  0 \\
   0 &  0 & -1 &  0 \\
   0 &  1 &  1 &  1 \\
	\end{bmatrix*}} \\
	j = 2 : &
	\footnotesize{\begin{bmatrix*}[r]
   1  &  0  &  0  &  0  &  0  &  0 \\
   0  &  0  &  0  &  1  &  0  &  0 \\
   0  &  0  &  0  & -1  & -1  &  0 \\
   0  & -1  &  0  &  0  &  0  &  0 \\
   1  &  1  &  1  &  0  &  0  &  0 \\
   0  &  0  &  0  &  1  &  0  & -1 \\
	\end{bmatrix*}} \\
\smallskip
	j = 3 : &
	\footnotesize{\begin{bmatrix*}[r]
  -1 &  0 &  0 &  0 \\
   1 &  1 &  0 &  0 \\
   0 &  0 &  0 &  1 \\
   1 &  0 & -1 &  0 \\
	\end{bmatrix*}} \\
\smallskip
	j = 4 : &
	 \footnotesize{\begin{bmatrix}
		-1
	\end{bmatrix}}
	\end{split}
	\]

\noindent Applying
	$\Hom_{\Z G}(-, \oH^j(L, \Z))$ to the resolution \eqref{Z4:res} for $G$, we
	obtain a complex:
	\[
	0 \to \oH^j(L, \Z) \xrightarrow{M-I} \oH^j(L, \Z) \xrightarrow{M^3+M^2+M+I}
	\oH^j(L, \Z) \xrightarrow{M-I} \cdots
	\]

with the	corresponding Smith normal forms:
	\[
	\begin{split}
		j = 1 : & \SNF(M-I) = \odiag([1,1,2,0])_{4 \times 4}, \\
		& \SNF(M^3+M^2+M+1) = \odiag([2,0,0,0])_{4 \times 4} \\
	j = 2 : & \SNF(M-I) = \odiag([1,1,1,1,4,0])_{6 \times 6}, \\
	& \SNF(M^3+M^2+M+1) = \odiag([2,0,0,0,0,0])_{6 \times 6} \\
	j = 3 : & \SNF(M-I) = \odiag([1,1,2,0])_{4 \times 4}, \\
	& \SNF(M^3+M^2+M+1) = \odiag([2,0,0,0])_{4 \times 4} \\
	j = 4 : & \SNF(M-I) = [2], \\
	& \SNF(M^3+M^2+M+1) = [0] \\
	\end{split}
	\]
Proceeding as in step (vii), we finish the computations.
\end{proof}

From the two propositions, we immediately obtain:
\begin{cor}\label{counter}  For the crystallographic group $\Gamma$, we have:
	\[
	\oH^4(\Gamma, \Z) = \Z^2_4  \neq
	\Z_4 \oplus\Z^3_2 = \bigoplus_{i+j=4} \oH^i (G, \oH^j (L, \Z))
	\]
	Therefore, Conjecture \ref{conj} is false.
\end{cor}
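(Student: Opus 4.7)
The plan is to read off both sides of the claimed isomorphism directly from Propositions 3.1 and 3.2 and observe that they already disagree in degree $4$. First I would specialize Proposition 3.1 to $i=4$, which falls into the case $i=2k$ with $k=2\geq 2$, yielding $\oH^4(\Gamma,\Z)\cong \Z_4^2$.

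Next I would assemble the right-hand side by running through the pairs $(i,j)$ with $i+j=4$ and $0\leq j\leq 4$, classifying each according to Proposition 3.2 via the parity of $i$ and the value of $j$. I expect a contribution of $\Z_2$ from each of $(1,3)$, $(2,2)$, $(3,1)$, a contribution of $\Z_4$ from $(4,0)$, and a vanishing contribution from $(0,4)$ (which lands in the "otherwise" case since the first clause of Proposition 3.2 requires $j\leq 3$ when $i=0$). Summing gives $\Z_4\oplus\Z_2^3$.

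Finally I would compare the two abelian groups. Since $\Z_4^2$ has only $4$ elements of order dividing $2$ whereas $\Z_4\oplus\Z_2^3$ has $16$, they cannot be isomorphic, so Conjecture \ref{conj} fails for this $\Gamma$. There is essentially no obstacle: the substantive work has already been carried out in the two preceding propositions, and the only care required is matching the boundary pair $(0,4)$ with the correct case of Proposition 3.2 rather than incorrectly reading it as $\Z$ via the first clause.
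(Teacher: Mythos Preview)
Your proposal is correct and is exactly the approach the paper takes: it simply states that the corollary follows immediately from the two preceding propositions, and you have faithfully spelled out that reading-off (including the correct handling of the $(0,4)$ term). The only superfluous step is the $2$-torsion count at the end, since $\Z_4^2$ and $\Z_4\oplus\Z_2^3$ already have different orders ($16$ versus $32$).
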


\section{All counterexamples up to dimension 5} \label{sect:4and5}
The algorithm for twisted tensor product is implemented, for example, in the HAP
package in the system GAP (see \cite{GAP4}). We implemented our version of the algorithm
which is adjusted to our case and allows for more efficient computations.

In this section we list all cases of crystallographic groups of dimensions up
to 5 which do not satisfy Conjecture \ref{conj}.
For the list of all crystallographic groups in low dimensions we use the
classification given in CARAT (see \cite{CARAT}).

Up to dimension 3,
all crystallographic groups of the form $L \rtimes G$ with $G$ being cyclic satisfy
the conjecture.

In dimension 4, there are 44 non-isomorphic
crystallographic groups of this type.
Among these, 2 do not satisfy  Conjecture \ref{conj}.
Both of them have the holonomy group of order 4.

\begin{rem}{\label{diff}}The holonomy representation of the first group is generated by the matrix:
{\footnotesize
\[
\begin{bmatrix*}[r]
 -1 &  0 &  0 &  0 \\
  0 &  0 &  0 & -1 \\
  0 &  1 &  0 &  1 \\
  0 &  0 & -1 &  1 \\
\end{bmatrix*}
\]}which has the cohomology given in the Table \ref{tab:homology} under the
notation $1:4\rtimes 4$. We observe that:
\[
\begin{split}
H^4(\Z^4 \rtimes \Z_4, \Z) & = \Z_4 \oplus \Z_2^3, \\
\bigoplus_{i+j=4} H^i(G, H^j(\Z^4, \Z)) & = \Z_4^2\oplus \Z_2^2.
\end{split}
\]
This implies that, in the associated Lyndon-Hochschild-Serre spectral sequence, there are nonzero differentials.\end{rem}
\begin{rem}
Let us note that  the holonomy representation of this group is $\Z$-equivalent to a direct product
of representations of dimensions 1 and 3, and the 3-dimensional representation
is the example $\rho_6$ from \cite[Section 5]{AGPP}. It was not known if there
was a special free $\Z [\Z ^3]$-resolution of
$\Z$ that admitted a compatible action of $\Z_4$ via the representation $\rho_6$ (see \cite[2.4, 5.1]{AGPP}). The example of the group $1:4\rtimes 4$ shows
such a compatible action can never exist. Otherwise, by Lemma 2.2  and Theorem 2.3 in \cite{AGPP}, we would arrive at a contradiction.  In fact, we can say more:
\end{rem}

\begin{thm}{\label{genconj}} Consider $L=\Z^3$ and $\Gamma=L\rtimes_{\rho_6} \Z_4$. Let $A=L$ as a $\Z\Gamma$-module via the representation $\rho_6$. Then, in the Lyndon-Hochschild-Serre spectral sequence associated to $\Gamma$,  the differential $d^{0,2}_2(A):E_2^{0,2}(A)\to E_2^{2,1}(A)$ is nonzero. In particular,
$$\oH^2(\Gamma, A)\not\cong  \bigoplus_{i+j=2}\oH^i(\Z_4, \oH^j(L, A)).$$
\end{thm}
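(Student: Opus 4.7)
The plan is to leverage the split structure of the extension $1 \to L \to \Gamma \to \Z_4 \to 1$ to isolate $d_2^{0,2}(A)$ as the only differential affecting $\oH^2(\Gamma, A)$. Because $L$ acts trivially on $A$ and the extension is split, the section $s \colon \Z_4 \hookrightarrow \Gamma$ induces a retraction $s^* \colon \oH^*(\Gamma, A) \to \oH^*(\Z_4, A)$ splitting the inflation map. In the Lyndon-Hochschild-Serre spectral sequence this forces $E_\infty^{p, 0}(A) = E_2^{p, 0}(A)$ for every $p$, and consequently every differential $d_r^{a, r-1} \colon E_r^{a, r-1} \to E_r^{a+r, 0}$ landing in the bottom row vanishes. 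In particular $d_2^{0, 1}(A) = d_2^{1, 1}(A) = d_3^{0, 2}(A) = 0$, so the filtration on $\oH^2(\Gamma, A)$ has graded pieces $E_2^{2, 0}(A)$, $E_2^{1, 1}(A)$, and $\ker d_2^{0, 2}(A)$.

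Thus the assertion $d_2^{0, 2}(A) \neq 0$ is equivalent to $\ker d_2^{0, 2}(A) \subsetneq E_2^{0, 2}(A)$, which in turn will follow once a discrepancy (in rank or torsion order) is exhibited between $\oH^2(\Gamma, A)$ and $\bigoplus_{i+j=2} E_2^{i, j}(A)$. To produce such a discrepancy, both sides can be computed directly. For the right-hand side, since $L$ acts trivially on $A$, universal coefficients gives $\oH^j(L, A) \cong \Lambda^j L^* \otimes_\Z A$ as $\Z\Z_4$-modules, and the groups $\oH^i(\Z_4, \Lambda^j L^* \otimes A)$ with $i + j = 2$ are then obtained using the $2$-periodic $\Z\Z_4$-resolution and Smith normal forms of the $\rho_6$-action matrices, following Proposition 3.2 almost verbatim. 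For the left-hand side, the twisted tensor product construction of Section 2 (with $n = 3$) yields a free $\Z\Gamma$-resolution of $\Z$; applying $\Hom_{\Z\Gamma}(-, A)$ and reducing the resulting integer matrices to Smith normal form produces $\oH^2(\Gamma, A)$.

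The main obstacle is the twisted tensor product calculation itself, which must be carried out anew with the representation $\rho_6$ and then with coefficients in the nontrivial $\Z\Gamma$-module $A$. The underlying $\Z\Gamma$-resolution depends only on the extension, so a dimension-$3$ analogue of the computation of Section 3 suffices to obtain the maps $d_k$ of Theorem \ref{lem:Wall}; but once we dualize into $\Hom_{\Z\Gamma}(-, A)$ every coboundary map is represented by an integer matrix whose size is that of the trivial-coefficient version multiplied by $\mathrm{rk}_\Z A = 3$, with entries encoding the action of $\rho_6$. Once both sides are determined, the rank or torsion discrepancy between them must, by the structural argument above, come entirely from $\ker d_2^{0, 2}(A) \subsetneq E_2^{0, 2}(A)$, giving $d_2^{0, 2}(A) \neq 0$ and hence the desired non-isomorphism $\oH^2(\Gamma, A) \not\cong \bigoplus_{i+j=2} \oH^i(\Z_4, \oH^j(L, A))$.
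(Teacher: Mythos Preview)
Your plan is sound but takes a route genuinely different from the paper's.  The paper does \emph{not} compute $\oH^2(\Gamma,A)$ at all; instead it argues by contradiction through the characteristic-class machinery of \cite{Sah}, \cite{Pet}, \cite{DP}.  Assuming $d_2^{0,2}(A)=0$ and using that $\oH_2(L,\Z)\cong A$ and $\oH_3(L,\Z)\cong\Z$ as $\Z\Gamma$-modules, one gets $v_2^2=0$; then \cite[7.11]{DP} forces $v_2^3=0$, and the already-known collapse with $\Z$-coefficients gives $v_3^3=0$.  With all characteristic classes vanishing, \cite[7.13]{DP} says the spectral sequence for $\Gamma$ collapses for \emph{every} coefficient module with trivial $L$-action, and since the holonomy of the 4-dimensional group $1{:}4\rtimes 4$ splits as $\rho_6\oplus(\text{1-dim})$, \cite[4.2]{Pet} transfers this collapse to that group --- contradicting the computation recorded in Remark~\ref{diff}.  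So the paper recycles the 4-dimensional counterexample and avoids any new resolution-level work, at the price of invoking the characteristic-class theory.  Your approach trades that theory for a fresh twisted-tensor-product computation with nontrivial coefficients; it is self-contained, and your structural reduction (the split section kills every differential landing on the bottom row, so only $d_2^{0,2}$ affects the degree-$2$ filtration) is correct.  One caveat worth making explicit: because $E_2^{2,0}(A)$ and $E_2^{1,1}(A)$ are finite, whenever $d_2^{0,2}=0$ one has $\lvert T(\oH^2(\Gamma,A))\rvert=\lvert E_2^{2,0}\rvert\cdot\lvert E_2^{1,1}\rvert\cdot\lvert T(E_2^{0,2})\rvert$, so a torsion-order discrepancy indeed forces $d_2^{0,2}\neq 0$; but a mere non-isomorphism with matching torsion orders would not, since it could arise from extension problems.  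Your proposal therefore succeeds precisely when the computation exhibits a discrepancy of that specific kind.
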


The group $\Gamma$ gives the lowest possible counterexample to a more general form of Conjecture {\ref{conj}} stated in \cite[1.2]{Pet}, where one allows nontrivial coefficients. This is because  any group $\Z^n\rtimes \Z_m$ for $n\leq 2$ admits a local compatible action (see \cite[3.1]{AP}) and therefore, by a slight generalization of a theorem of Adem and Pan (see the proof of \cite[2.3]{AP}), satisfies this more general form of the conjecture.

\begin{proof}[Proof of \ref{genconj}] We apply the theory of characteristic classes introduced by Sah in \cite{Sah} and further studied in \cite{Pet} and  \cite{DP}.

Suppose, by a way of contradiction that $d^{0,2}_2(A)=0$. One can easily check that  $\oH_2(A,\Z)$ and $\oH_3(A,\Z)$, as $\Z\Gamma$-modules, are isomorphic  to $A$  and the trivial module $\Z$, respectively.

Now, the characteristic class $v_2^2$, being in the image of the differential $d^{0,2}_2(\oH_2(A,\Z))$,  vanishes. The only other possible nonzero characteristic class that can occur on the second page of the spectral sequence is $v_2^3$.  But, by Theorem 7.11 of \cite{DP}, it follows that the order of $v_2^3$ is a divisor of one. Hence, it also vanishes. Since, we already know that the Lyndon-Hochschild-Serre  spectral sequence associated to $\Gamma$ collapses with $\Z$-coefficients, we can  conclude that the differential $d^{0,3}_3(\oH_3(A,\Z))=0$ implying that $v_3^3=0$.   Thus, we have shown that all characteristic classes vanish. Therefore, the Lyndon-Hochschild-Serre spectral sequence collapses at $E_2$ for all coefficient modules that have a trivial $L$-action (see \cite[7.13]{DP}).

Since the holonomy representation of the group $1:4\rtimes 4$ decomposes into
a direct sum of $\rho_6$ and the nontrivial one-dimensional representation, by Corollary 4.2 of \cite{Pet}
(see also \cite[7.3-5]{DP}), it follows that the Lyndon-Hochschild-Serre spectral
sequence associated to the group $1:4\rtimes 4$ collapses at  $E_2$ for all
coefficient modules that have a trivial $L$-action. But this is clearly a
contradiction to our computations of the $4$-dimensional integral cohomology
of the group $1:4\rtimes 4$ (see Remark \ref{diff}).
\end{proof}

The second 4-dimensional counterexample to the conjecture
is the crystallographic group of Section \ref{sect:counter} given by the
matrix \eqref{counter:matrix}. We enclose its cohomology groups in Table
\ref{tab:homology} under the number 2.

In dimension 5, there are 95 non-isomorphic
crystallographic groups with cyclic holonomy of split type.
Out of these, 6  do not satisfy Conjecture \ref{conj},
5 of them with holonomy $\Z_4$ and 1 with holonomy $\Z_8$. We list the matrices corresponding to the  their
holonomy generators below and their cohomology groups in Table
\ref{tab:homology} with numbers from 3 to 8.
{\footnotesize
\begin{align*}
3 & \colon
\begin{bmatrix*}[r]
 -1 &  0 &  0 &  0 &  0 \\
  0 & -1 &  0 &  0 &  0 \\
  0 &  0 &  0 &  0 & -1 \\
  0 &  0 &  1 &  0 &  1 \\
  0 &  0 &  0 & -1 &  1
\end{bmatrix*}, &
4 & \colon
	\begin{bmatrix*}[r]
 -1 &  0 &  0 &  0 &  0 \\
  0 &  0 & -1 &  0 &  0 \\
  0 &  1 &  0 &  1 &  1 \\
  0 &  0 &  0 &  0 &  1 \\
  0 &  0 &  0 &  1 &  0
	\end{bmatrix*}\\
5 & \colon
	\begin{bmatrix*}[r]
 -1 &  0 &  0 &  0 &  0 \\
  0 &  1 &  0 &  0 &  0 \\
  0 &  0 &  1 &  1 &  0 \\
  0 &  0 & -1 &  0 & -1 \\
  0 &  0 & -1 &  0 &  0
	\end{bmatrix*},&
6 & \colon
	\begin{bmatrix*}[r]
  1 &  0 &  0 &  0 &  0 \\
  0 &  0 &  1 &  0 &  0 \\
  0 & -1 &  0 & -1 & -1 \\
  0 &  0 &  0 &  0 & -1 \\
  0 &  0 &  0 & -1 &  0
	\end{bmatrix*} \\
7 & \colon
	\begin{bmatrix*}[r]
  1 &  1 &  0 &  0 &  0 \\
 -1 &  0 &  0 & -1 &  0 \\
  0 &  0 &  0 &  0 & -1 \\
 -1 &  0 &  0 &  0 &  0 \\
  0 &  0 & -1 &  0 &  0
	\end{bmatrix*}, &
8 & \colon
	\begin{bmatrix*}[r]
0 & 0 & 0 & 0 & -1 \\
1 & 0 & 0 & 0 & -1 \\
0 & 1 & 0 & 0 & 0 \\
0 & 0 & 1 & 0 & 0 \\
0 & 0 & 0 & 1 & -1
	\end{bmatrix*}
\end{align*}}

\def\changemargin#1#2{\list{}{\rightmargin#2\leftmargin#1}\item[]}
\let\endchangemargin=\endlist
{\begin{table}[h]
	\caption{Cohomology groups of counterexamples up to dimension 5.
}	
	\label{tab:homology}
	\begin{changemargin}{-2.7cm}{1.0cm}
\begin{tabular}{|c|c|c|c|c|c|c|c|c|c|}
	\hline
	 & Type & CARAT name & $\oH^1$ & $\oH^2$ & $\oH^3$ & $\oH^4$ & $\oH^5$ & $\oH^{2k}$ & $\oH^{2k+1}$\\
	\hline
	$1$ & ${4 \rtimes 4}$
& min.27-1.2
		& $\Z$
		& $\Z \oplus \Z_4 \oplus \Z_2$
		& $\Z \oplus \Z_4 \oplus \Z_2$
		& $\Z_4 \oplus \Z_2^3$
		& $\Z_4 \oplus \Z_2^3$
		& $\Z_4 \oplus \Z_2^3$
		& $\Z_4 \oplus \Z_2^3$
		\\
	\hline
$2$ & ${4 \rtimes 4}$
& min.27-1.5
		& $\Z$
		& $\Z \oplus \Z_4 \oplus \Z_2$
		& $\Z \oplus \Z_4 \oplus \Z_2$
		& $\Z_4^2$
		& $\Z_2^4$
		& $\Z_4^2$
		& $\Z_2^4$
		\\
	\hline
%
%
$3$ & ${5 \rtimes 4}$
& min.81-1.2
		& $\Z$
		& $\Z^2 \oplus \Z_4 \oplus \Z_2^2$
		& $\Z^2 \oplus \Z_4^2 \oplus \Z_2$
		& $\Z \oplus \Z_4^2 \oplus \Z_2^5$
		& $\Z \oplus \Z_4 \oplus \Z_2^6$
		& $\Z_4^2 \oplus \Z_2^6$
		& $\Z_4^2 \oplus \Z_2^6$
		\\
	\hline
$4$ & ${5 \rtimes 4}$
& min.81-1.5
		& $\Z$
		& $\Z^2 \oplus \Z_4 \oplus \Z_2^2$
		& $\Z^2 \oplus \Z_4 \oplus \Z_2^2$
		& $\Z \oplus \Z_4^2 \oplus \Z_2^3$
		& $\Z \oplus \Z_4 \oplus \Z_2^4$
		& $\Z_4^2 \oplus \Z_2^4$
		& $\Z_4^2 \oplus \Z_2^4$
		\\
	\hline
$5$ & ${5 \rtimes 4}$
& min.82-1.3
		& $\Z^2$
		& $\Z^2 \oplus \Z_4 \oplus \Z_2$
		& $\Z^2 \oplus \Z_4^2 \oplus \Z_2^2$
		& $\Z \oplus \Z_4^2 \oplus \Z_2^4$
		& $\Z_4^2 \oplus \Z_2^6$
		& $\Z_4^2 \oplus \Z_2^6$
		& $\Z_4^2 \oplus \Z_2^6$
		\\
	\hline
$6$ & ${5 \rtimes 4}$
& min.82-1.5
		& $\Z^2$
		& $\Z^2 \oplus \Z_4 \oplus \Z_2$
		& $\Z^2 \oplus \Z_4^2 \oplus \Z_2^2$
		& $\Z \oplus \Z_4^3 \oplus \Z_2$
		& $\Z_4^2 \oplus \Z_2^4$
		& $\Z_4^2 \oplus \Z_2^4$
		& $\Z_4^2 \oplus \Z_2^4$
		\\
	\hline
$7$ & ${5 \rtimes 4}$
& min.82-1.7
		& $\Z^2$
		& $\Z^2 \oplus \Z_4$
		& $\Z^2 \oplus \Z_2^4$
		& $\Z \oplus \Z_4^2 \oplus \Z_2^2$
		& $\Z_4 \oplus \Z_2^5$
		& $\Z_4 \oplus \Z_2^5$
		& $\Z_4 \oplus \Z_2^5$
		\\
	\hline
$8$ & ${5 \rtimes 8}$
& min.142-1.2
		& 0
		& $\Z^2 \oplus \Z_8 \oplus \Z_4$
		& $\Z^2$
		& $\Z \oplus \Z_8 \oplus \Z_4 \oplus \Z_2^2$
		& $\Z_2^2$
		& $\Z_8^2 \oplus \Z_2^4$
		& $\Z_2^2$
		\\
	\hline
\end{tabular}
    \end{changemargin}
 \begin{changemargin}{-2.7cm}{-2.7cm}
{\footnotesize   Notation ${d \rtimes n}$ in the column ``Type'' gives the information that the
group is of dimension $d$ and has holonomy group of order $n$.
CARAT name is the name of the group in the classification given in system
CARAT. Note that CARAT uses left action of the holonomy group, thus holonomy
representation has to be transposed before identification in CARAT. }   \end{changemargin}
  \end{table}}
\begin{rem} {Let $\Gamma=L\rtimes G$ be the group $8:5\rtimes 8$ from the table.
We calculate the terms comprising the right hands side of the conjectured isomorphism:
\[
H^i(G, H^j(L, \Z)) =
\begin{cases}
	\Z &  j =0,4, i=0 \\
	\Z^2 &  j =2,3, i=0 \\
	\Z_8 &   j = 0, i \ge 1, 2 \divides i \\
	\Z_4 &  j = 1, 2 \not\divides i \\
	\Z_2^2 &  j=2,3, i \ge 1, 2 \divides i \\
	\Z_4 &  j = 4, i \ge 1, 2 \divides i \\
	\Z_2 &  j = 5, i \ge 1, 2 \divides i \\
	0 & \text{otherwise}
\end{cases}
\]
 to observe that}
%
 the free ranks and the orders of the maximal finite subgroups of the groups
$H^k(L \rtimes G, \Z)$ and $\bigoplus_{i+j=k} H^i(G, H^j(L, \Z))$ are the
same for every $k$. This means that the Lyndon-Hochschild-Serre spectral sequence
 collapses at $E_2$ but there are extension problems.
\end{rem}

\section{Other examples}

In this section we state the results of our computations for several examples of crystallographic groups of higher
dimensions.

\subsection{Unresolved cases from \cite{AGPP}}
 Several crystallographic groups that were considered in \cite{AGPP} were not known to satisfy
 Conjecture \ref{conj}.
In section 5 of the same paper, the authors studied all crystallographic groups with holonomy
$\Z_4$ of split type whose holonomy representations are indecomposable. Out of
total 9 such groups, there were two 4 dimensional examples, encoded  $\rho_8$
and $\rho_9$, which were not known to satisfy Conjecture \ref{conj}. We verify
that the example of $\rho_8$ satisfies the conjecture. The
example of  $\rho_9$ is the same as the one considered in Section \ref{sect:counter},
so also $2:4\rtimes 4$ in Table \ref{tab:homology}. Hence, it does not satisfy
the conjecture.

In section 6 of  \cite{AGPP}, in relation to certain $6$-dimensional Calabi-Yau toroidal orbifolds arising in string theory,  some crystallographic groups were considered.  It was shown, that out of possible 18 such groups only two, denoted $\Z_8^{(5)}$ and $\Z_{12}^{(6)}$ were not known to admit local compatible actions. So, their cohomology was not computed.
The 5-dimensional group $\Z_8^{(5)}$  is the same as
example $8:5\rtimes 8$ from Table \ref{tab:homology}. So, it does not satisfy Conjecture \ref{conj}.
We show that the 6-dimensional group $\Z_{12}^{(6)}$ also does not satisfy the conjecture. It has holonomy group $G$ of order 12 generated by the matrix:

{\footnotesize
\[
\begin{bmatrix*}[r]
   0 & 0 & 0 & 0 & 0 & -1 \\
   1 & 0 & 0 & 0 & 0 & -1 \\
   0 & 1 & 0 & 0 & 0 & 0 \\
	 0 & 0 & 1 & 0 & 0 & 1 \\
	 0 & 0 & 0 & 1 & 0 & 0 \\
   0 & 0 & 0 & 0 & 1 & -1 \\
\end{bmatrix*}.
\]}
Cohomology groups of the corresponding crystallographic group are as follows:
\begin{eqnarray*}
\mathrm{H}^i(\Z^6 \rtimes G) &=& \left\{ \begin{array}{lll}
	0  & i=1\\
	\Z^3 \oplus \Z_{12} \oplus \Z_3  & i=2\\
	\Z^2  & i=3\\
	\Z^3 \oplus \Z_{12} \oplus \Z_6 \oplus \Z_3^3  & i=4\\
	\Z_2^2  & i=5\\
	\Z \oplus \Z_{12} \oplus \Z_6^2 \oplus \Z_3^5  & i=6\\
	\Z_2^2   & i=2k-1, & k\geq 4\\
	\Z_{12}^2 \oplus \Z_6^2 \oplus \Z_3^5  & i=2k, & k\geq 4.
                   \end{array}\right.
\end{eqnarray*}

\subsection{Cyclic holonomy group of odd non-prime order}

All previous counterexamples to  Conjecture \ref{conj} have holonomy
of order divisible by 4.
We provide counterexample with odd order holonomy  $\Z_9$.

The first occurrence of a crystallographic group with holonomy $\Z_9$ is in dimension 6 and up to an isomorphism, it is the unique one  in this dimension. We verify that this example satisfies  the conjecture.

We find a counterexample of dimension 8 where the holonomy representation is generated by the matrix:
{\footnotesize
\[
\begin{bmatrix*}[r]
-1 &  0 &-1 &-1 &-1 & 0 & 0 & 0 \\
 1 &  1 & 1 & 0 & 1 & 0 & 1 & 0 \\
 0 &  0 & 0 & 0 & 0 & 0 & 1 & 0 \\
 0 &  0 & 0 & 0 & 1 & 0 & 0 & 0 \\
 1 &  0 & 0 & 0 & 0 & 0 &-1 &-1 \\
-1 & -1 & 0 & 0 &-1 &-1 & 0 & 0 \\
-1 & -1 & 0 & 0 & 0 & 0 & 0 & 1 \\
 0 &  1 & 0 & 0 & 0 & 1 & 0 & 0 \\
\end{bmatrix*}.
\]
}by calculating that
\[
\oH^4(\Z^8 \rtimes G, \Z) = \Z^8  \oplus \Z_9^2 \oplus \Z_3^4 \neq
\Z^8 \oplus \Z_9 \oplus \Z_3^6 = \bigoplus_{i+j=4}\oH^i(G, \oH^j(\Z^8, \Z)).
\]

\def\cprime{$'$}

\end{document}